\theoremstyle{definition}
\theoremstyle{remark}
\newtheorem{theorem}{\bf Theorem}[section]
\newtheorem{lemma}[theorem]{\bf Lemma}
\newtheorem{remark}[theorem]{\bf Remark}
\numberwithin{equation}{section}
\begin{document}

\title[Finite $2$-groups of class $2$]{Finite $2$-groups of class $2$  with specific automorphism group }
\author[A. Abdollahi,  M. Ahmadi, S. M. Ghoraishi]{A. Abdollahi, M. Ahmadi, S. M. Ghoraishi }
\address[A. Abdollahi]{Department of Mathematics\\ University of Isfahan
\\ $81746-73441$ Isfahan\\ Iran\\ and \\ School of Mathematics
Institute for Research in Fundamental Sciences (IPM)\\
P.O.Box: 19395--5746 Tehran\\ Iran}
\email[First author]{a.abdollahi@math.ui.ac.ir}
\address[M. Ahmadi]{Department of Mathematics\\ University of Isfahan
\\ $81746-73441$ Isfahan\\ Iran}
\email[Second author]{m\_ahmadi@sci.ui.ac.ir}
\address[S. M. Ghoraishi]{Department of Mathematics\\ University of Isfahan
\\ $81746-73441$ Isfahan\\ Iran}
\email[Last author]{ghoraishi@gmail.com}

\keywords{Finite $p$-group of class $2$; non-inner automorphism; Frattini subgroup.}%
\subjclass[2000]{20D45; 20D15}
\begin{abstract}
In  this paper we classify  all finite  $2$-groups of
class $2$ for which every automorphism  of order $2$ leaving the Frattini subgroup elementwise fixed is inner.
We prove that every such group $G$ is isomorphic to $Q(n,r)=\langle a,b\mid a^{2^n}=b^{2^r}=1, \,\,\,a^{2^{n-r}}=[a, b]\rangle$ for some positive integers $r,n$ such that $2<2r\leq n$; and every automorphism  of $Q(n,r)$ of  order $2$ leaving the Frattini subgroup elementwise fixed is inner.
\end{abstract}
\maketitle
\section{Introduction}
\label{intro}
Let $p$ be a  prime number and $G$ be a non-abelian  finite  $p$-group.  A
longstanding conjecture \cite{ko}
 asserts  $G$ possesses a non-inner automorphism of order $p$. By the celebrated  result of
W. Gasch\"utz \cite{ga}, non-inner  automorphisms of
$G$  of $p$-power order exist. Deaconescu and Silberberg  \cite{de} proved that a finite $p$-group $G$ satisfying  the condition
$C_G(Z(\Phi(G)))\not=\Phi(G)$ has a noninner automorphism of order $p$ leaving the Frattini subgroup $\Phi(G)$ elementwise fixed.   Liebeck \cite{li}
has shown the same result when
 $G$ is nilpotent of class $2$ and $p>2$. He also showed that the latter  is not
valid for $p=2$  by giving  an example of a finite $2$-group of class $2$ and  order
$128$ in which every automorphism of order $2$
fixing $\Phi(G)$   elementwise is inner.   The first author
\cite{ab2} exhibited  another example of a finite $2$-group of order
$64$ and class $2$ in which every automorphism of order $2$ fixing
$\Phi(G)$  is inner. In \cite{ab1}
it is  shown that  every $2$-group of class $2$ has a non-inner automorphism of order $2$
fixing $\Omega_1(Z(G))$ or $\Phi(G)$ elementwise (actually the same proof in \cite{ab1}
shows that such an automorphism fixes $Z(G)$ or $\Phi(G)$ elementwise).

In this paper we classify all  finite $2$-groups of class $2$ for which every automorphism of order $2$
leaving the Frattini subgroup elementwise fixed is inner.
\begin{theorem}\label{1}
If $G$ is a finite $2$-group of class $2$ for which every automorphism of order $2$ leaving the Frattini subgroup elementwise fixed is inner, then  $G$ is isomorphic to  $\langle a,b\mid a^{2^n}=b^{2^r}=1, \,\,\,a^{2^{n-r}}=[a, b]\rangle, ~2<2r\leq n.$
 \end{theorem}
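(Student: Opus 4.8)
The plan is to convert the hypothesis into the single structural equation $C_G(Z(\Phi(G)))=\Phi(G)$ and then exploit it. First I would invoke the theorem of Deaconescu and Silberberg quoted above: if $C_G(Z(\Phi(G)))\neq\Phi(G)$ then $G$ has a non-inner automorphism of order $2$ fixing $\Phi(G)$ elementwise, contrary to hypothesis; hence $C_G(Z(\Phi(G)))=\Phi(G)$. Two easy consequences follow. Central elements centralize $Z(\Phi(G))$, so $Z(G)\le\Phi(G)$; and any $c$ centralizing $\Phi(G)$ a fortiori centralizes $Z(\Phi(G))$ and so lies in $\Phi(G)$, whence $C_G(\Phi(G))=Z(\Phi(G))$. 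Writing $W=Z(\Phi(G))$, conjugation makes $G/\Phi(G)$ act faithfully on $W$, and since $G$ has class $2$ the commutator descends to a nondegenerate alternating form $\beta\colon G/Z(G)\times G/Z(G)\to G'$.

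Next I would exploit the "central" automorphisms. For each homomorphism $f\colon G/\Phi(G)\to\Omega_1(Z(G))$ set $\alpha_f(g)=g\,f(g\Phi(G))$; using class $2$ and $Z(G)\le\Phi(G)$ one checks $\alpha_f\in\mathrm{Aut}(G)$, that $\alpha_f$ fixes $\Phi(G)$ elementwise, and that $\alpha_f^2=1$. By hypothesis every nontrivial $\alpha_f$ is inner, and an inner automorphism fixing $\Phi(G)$ elementwise is conjugation by some $c\in C_G(\Phi(G))=W$; comparing gives $f(g\Phi(G))=[g,c]$, and the order-$2$ condition forces $c^2\in Z(G)$. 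This yields an isomorphism $\mathrm{Hom}(G/\Phi(G),\Omega_1(Z(G)))\cong\{c\in W:c^2\in Z(G)\}/Z(G)$, i.e.\ the rank identity $d\,m=s$ with $d=\mathrm{rank}(G/\Phi(G))$, $m=\mathrm{rank}(Z(G))$, $s=\mathrm{rank}(W/Z(G))$. Moreover, for $g\in G$, $w\in W$ one has $[w,g]^2=[w,g^2]=1$, since $g^2\in\Phi(G)$ centralizes $W$; hence the pairing $G/\Phi(G)\times W/Z(G)\to G'$ takes values in $\Omega_1(G')\le\Omega_1(Z(G))$ and is nondegenerate on both sides. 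Reducing the right factor modulo squares, the two nondegeneracy inequalities together with $G'\le Z(G)$ collapse to $\mathrm{rank}(G')=m$ and $\Omega_1(G')=\Omega_1(Z(G))$.

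The main obstacle is to pass from these numerical constraints to $d=2$, i.e.\ to prove $G$ is $2$-generated. Here I would bring in the full force of $\beta$ being a \emph{nondegenerate alternating} form on $G/Z(G)$ whose restriction to $\Phi(G)/Z(G)$ has radical exactly $W/Z(G)$: such a form forces a symplectic pattern on $G/Z(G)$, and the identities $dm=s$ and $\mathrm{rank}(G')=m$ pin the relevant ranks. If the bilinear bookkeeping does not by itself exclude $d>2$, the fallback is to build, from a hypothetical generating set of size $d>2$, an explicit non-central order-$2$ automorphism fixing $\Phi(G)$ that is not inner, contradicting the hypothesis. Either route gives $d=2$; then $G=\langle a,b\rangle$, $G'=\langle[a,b]\rangle$ is cyclic, and the above makes $Z(G)$ cyclic with $\Omega_1(Z(G))=\Omega_1(G')$.

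Finally I would classify. A $2$-generated $2$-group of class $2$ with cyclic derived subgroup is metacyclic: choosing $a,b$ so that $G'\le\langle a\rangle$, the relations become $a^{2^{n}}=1$, $b^{2^{r}}=1$ (after adjusting $b$ by a power of $a$) and $[a,b]=a^{2^{n-r}}$, with the exponents dictated by $|G'|$, by $Z(G)=\langle a^{2^{r}}\rangle$ being cyclic, and by the requirement that $a^{2^{n-r}}=[a,b]$ be central, which is precisely $2r\le n$. The remaining inequality $2<2r$, that is $r\ge 2$, is forced because for $r=1$ the section $W/Z(G)$ degenerates and the identity $dm=s$ fails, so the hypothesis cannot hold; this gives $2<2r\le n$ and identifies $G$ with the stated group. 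I expect the delicate point throughout to be the $d=2$ step, since $\beta$ takes values in the possibly non-elementary-abelian group $G'$ and the naive dimension counts over $\mathbb F_2$ are not immediately conclusive.
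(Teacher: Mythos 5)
Your opening reductions are sound and match the paper's: Deaconescu--Silberberg gives $C_G(Z(\Phi(G)))=\Phi(G)$, hence $Z(G)\le\Phi(G)$ and $C_G(\Phi(G))=Z(\Phi(G))$, and the central-automorphism count gives the rank relation between $d(G)$, $Z(G)$ and $Z(\Phi(G))/Z(G)$ (this is the paper's Remark on $Hom(G/\Phi(G),\Omega_1(Z(G)))$). But the two load-bearing steps are missing. The step $d(G)=2$ is not actually proved: you offer a ``symplectic bookkeeping'' heuristic that you yourself concede is inconclusive (the form takes values in a cyclic $2$-group, not a field, so parity arguments do not apply directly), and your fallback --- ``build an explicit non-inner order-$2$ automorphism from a generating set of size $d>2$'' --- is precisely the statement that needs an argument, not an argument. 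The paper does this concretely in two stages: it constructs a homomorphism $\varphi$ from $\Omega_1(Z(\Phi(G)))^{d}$ into $\Omega_1(Z(G))^{\binom{d+1}{2}}$ whose kernel parametrizes order-$2$ automorphisms fixing $\Phi(G)$ elementwise, compares ranks against $(\star)$ to force $d\le 3$, and then kills $d=3$ separately by using cyclicity of $G'$ to show that some word $x_1^{j}x_2^{-i}x_3$ is central, hence a redundant generator. Nothing in your proposal substitutes for either stage.

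The final classification step is, moreover, incorrect as stated. A $2$-generator $2$-group of class $2$ with cyclic centre need not admit a presentation $a^{2^{n}}=b^{2^{r}}=1$, $[a,b]=a^{2^{n-r}}$ with $2r\le n$: the quaternion group $Q_8$ is already a counterexample (every element outside $\langle a\rangle$ has order $4$, so no adjustment of $b$ by a power of $a$ produces the claimed relations), and Leong's classification --- which the paper invokes at exactly this point --- contains two further families beyond the $Q(n,r)$ with $2r\le n$. Because your normalization wrongly assumes those families do not exist, the bulk of the required work is absent: one must exhibit, for each group in the other two families and for $Q(2,1)$ and $Q(n,1)$, an explicit non-inner automorphism of order $2$ fixing $\Phi(G)$ elementwise (the paper writes these down case by case), and one must also verify that $Q(n,r)$ with $2<2r\le n$ admits no such automorphism, which the paper does by listing all automorphisms fixing $\Phi(G)$ elementwise and applying Cheng's criterion that $\alpha$ is inner iff $[G,\alpha]\le G'$. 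Your one-line dismissal of $r=1$ via ``$dm=s$ fails'' is plausible but unverified, and no elimination at all is attempted for the families you overlooked.
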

Throughout the paper $p$ denotes a prime number. For a group $G$,
 $Z(G)$, $G'$, $\Phi(G)$ and  $d(G)$
denote the center, the derived subgroup, the Frattini subgroup and the minimum number of generators of $G$, respectively. The inner automorphism induced by $x\in G$ is denoted by $\theta_x$. If $G$ and $A$ are groups, $Hom(G,A)$ denotes the set of all homomorphisms from $G$ to $A$. If $A$ is an abelian group $Hom(G,A)$ with pointwise multiplication forms  an abelian group.

\section{ Reduction to $2$-generator groups}
Let  $G$ be  a finite $2$-group of class $2$ such
that every automorphism of order $2$  leaving the Frattini
subgroup of $G$ elementwise fixed is inner. Call this condition  $(\star)$.
\begin{remark}\label{Z}(\cite{ab2}, Remark 2.4)
 Let $G$ be a finite $p$-group of class $2$. If $G$ has no non-inner
automorphism of order $p$ leaving $\Phi(G)$ elementwise fixed,
then $Z(G)$ must be cyclic.
\end{remark}
\begin{remark}\label{hom}
If $\Omega_1(Z(G))\leq \Phi(G)$ and $f\in Hom(\frac{G}{\Phi(G)},\Omega_1( Z(G)))$, then it is well-known that the mapping $\phi_f$ defined by $x^{\phi_f}=x\bar{x}^f$ where
$\overline{x}=x\Phi(G)$ for any $x\in G$
, is an automorphism of order $2$ that fixes $\Phi(G)$
elementwise. If  $\phi_f$ is inner, then $\phi_f=\theta_x$ for
some $x\in G$. Since $\theta_x|_{\Phi(G)}=id$ one has $x\in
C_G(\Phi(G))$. If $G$ satisfies the condition $(\star)$, then it follows from \cite{de} that $C_G(Z(\Phi(G)))=\Phi(G)$
 and so $C_G(\Phi(G))=Z(\Phi(G))$. Thus $x\in Z(\Phi(G))$.
Now $(\star)$ implies that
$$d(G)d(Z(G))=d(Hom(\frac{G}{\Phi(G)}, \Omega_1(Z(G)))\leq d(\frac{Z(\Phi(G))}{Z(G)})\leq d(Z(\Phi(G))),$$
therefore we have $d(G)\leq d(Z(\Phi(G)))$ for any group $G$ satisfying the condition $(\star)$.
\end{remark}
\begin{lemma}\label{G=AB}
Let  $G$ be a finite group and  $A\unlhd G$, $B\leq G$ and $G=AB$. Then   $\alpha \in Aut(A)$ and
$\beta\in Aut(B)$ have a common  extension
 to  $G$ if and only if $\alpha$ and $\beta$  agree on  $A\cap B$ and $[a,b]^\alpha= [a^\alpha, b^\beta]$ for all $a\in A, b\in B$.
\end{lemma}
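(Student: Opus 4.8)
The plan is to treat the two implications separately, with essentially all of the work residing in the sufficiency direction. For necessity, suppose $\gamma\in Aut(G)$ restricts to $\alpha$ on $A$ and to $\beta$ on $B$. Restricting to $A\cap B$ gives at once $x^\alpha=x^\gamma=x^\beta$ for every $x\in A\cap B$, so $\alpha$ and $\beta$ agree there. Moreover, since $A\unlhd G$ we have $[a,b]\in A$ for all $a\in A$, $b\in B$, so $[a,b]^\gamma=[a,b]^\alpha$; on the other hand $\gamma$ preserves commutators, whence $[a,b]^\gamma=[a^\gamma,b^\gamma]=[a^\alpha,b^\beta]$. Comparing the two expressions yields the commutator identity. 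This direction uses nothing beyond $\gamma$ being a homomorphism extending $\alpha$ and $\beta$.

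For sufficiency, assume the two stated conditions and build a candidate extension. Since $G=AB$, every $g\in G$ can be written as $g=ab$ with $a\in A$, $b\in B$, and I would set $g^\gamma:=a^\alpha b^\beta$. The first task is well-definedness: if $ab=a'b'$ then $c:=(a')^{-1}a=b'b^{-1}$ lies in $A\cap B$, so $a=a'c$ and $b'=cb$; applying $\alpha$ and $\beta$ (homomorphisms on $A$ and $B$ respectively) and using $c^\alpha=c^\beta$, one gets $a^\alpha b^\beta=(a')^\alpha c^\alpha b^\beta=(a')^\alpha(cb)^\beta=(a')^\alpha(b')^\beta$. Notice that only the agreement of $\alpha$ and $\beta$ on $A\cap B$ is used at this stage.

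The crux of the argument — the step I expect to be the main obstacle — is verifying that $\gamma$ is multiplicative, and this is exactly where the commutator hypothesis enters. Writing $g_i=a_ib_i$ and using $A\unlhd G$ to rewrite $b_1a_2=(a_2^{b_1^{-1}})\,b_1$ with $a_2^{b_1^{-1}}\in A$, I obtain $g_1g_2=(a_1\,a_2^{b_1^{-1}})(b_1b_2)$, hence
\[
(g_1g_2)^\gamma=a_1^\alpha\,(a_2^{b_1^{-1}})^\alpha\,(b_1b_2)^\beta, \qquad g_1^\gamma g_2^\gamma=a_1^\alpha b_1^\beta a_2^\alpha b_2^\beta.
\]
After cancelling $a_1^\alpha$ on the left and $b_2^\beta$ on the right, the two sides agree precisely when $(a^{b})^\alpha=(a^\alpha)^{b^\beta}$ for all $a\in A$, $b\in B$. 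Since $a^b=a[a,b]$ and likewise $(a^\alpha)^{b^\beta}=a^\alpha[a^\alpha,b^\beta]$, this is equivalent to $[a,b]^\alpha=[a^\alpha,b^\beta]$ — our hypothesis — where one uses that $\beta$ is a homomorphism to pass freely between $b$ and $b^{-1}$.

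Finally I would confirm that $\gamma$ is genuinely an automorphism extending $\alpha$ and $\beta$. Taking $b=1$ gives $a^\gamma=a^\alpha$ and taking $a=1$ gives $b^\gamma=b^\beta$, so $\gamma$ restricts correctly; in particular its image is a subgroup containing $A^\alpha=A$ and $B^\beta=B$, hence all of $G=AB$. A surjective endomorphism of a finite group is bijective, so $\gamma\in Aut(G)$ is the desired common extension, completing the proof.
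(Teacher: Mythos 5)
Your proof is correct, and since the paper dismisses this lemma with ``It is straightforward,'' your argument is simply the full version of the intended routine verification: decompose $g=ab$, check well-definedness via agreement on $A\cap B$, and check multiplicativity via the identity $(a^b)^\alpha=(a^\alpha)^{b^\beta}$, which is exactly the commutator hypothesis. All steps, including the finiteness argument for surjectivity implying bijectivity, check out.
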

\begin{proof}
It is straightforward.
\end{proof}
\begin{theorem}\label{aut}
Let $G$ be a finite $2$-group of class $2$ and
$\frac{G}{\Phi(G)}=\langle
\overline{x_1}\rangle\times\dots\times\langle\overline{x_d}\rangle$
for some elements $x_1,\dots,x_d\in G$ where
$\overline{x}=x\Phi(G)$ for any $x\in G$.
 If $b_1,\dots ,b_d\in \Omega_1(Z(\Phi(G)))$ such that $[x_i,b_i]=1$ and
$[x_i,b_j]=[x_j,b_i]$ for $1\leq i<j\leq d$,
 then the mapping  $x_i$ to $x_i{b_i}$,  for $1\leq i\leq d$,
 can be extended to an automorphism of $G$ of order $2$.
\end{theorem}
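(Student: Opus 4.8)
The plan is to show that the assignment $x_i\mapsto x_ib_i$ extends to an endomorphism $\phi$ of $G$, and then to read off directly that $\phi$ is an involutory automorphism. First, since $\overline{x_1},\dots,\overline{x_d}$ generate $G/\Phi(G)$, the Burnside basis theorem gives $G=\langle x_1,\dots,x_d\rangle$, so it suffices to define $\phi$ on words in the $x_i$ by substituting $x_i\mapsto x_ib_i$ and to prove that the resulting value depends only on the element of $G$ represented, not on the chosen word. Throughout I will use that $G$ has class $2$, so commutators are bilinear and lie in $Z(G)$; that $b_i^{2}=1$ since $b_i\in\Omega_1(Z(\Phi(G)))$; and that the $b_i$ lie in the abelian group $Z(\Phi(G))$, hence commute with one another and with all of $\Phi(G)$.

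The heart of the argument is an expansion formula. For a word $w=x_{i_1}\cdots x_{i_k}$, substituting and then moving every $b_{i_t}$ to the right past the remaining $x$'s (each such move only introduces the central commutator $[b_{i_t},x_{i_s}]$, and the $b$'s commute past one another) yields
\[
\phi(w)=w\cdot\Big(\prod_{t}b_{i_t}\Big)\cdot\prod_{1\le t<s\le k}[b_{i_t},x_{i_s}].
\]
The three hypotheses are then used precisely to force the two correction factors to depend only on the image $\overline{g}\in G/\Phi(G)$ of the element $g$ represented by $w$. Writing $e_i$ for the $i$-th coordinate of $\overline{g}$, the first factor equals $\prod_i b_i^{e_i}$ because $b_i^{2}=1$. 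For the second, the relation $[x_i,b_j]=[x_j,b_i]$ makes the central element $[b_i,x_j]$ symmetric in $i,j$, while $[x_i,b_i]=1$ kills the diagonal terms and $[b_i,x_j]^{2}=[b_i^{2},x_j]=1$ leaves only parities; grouping the pairs $t<s$ by their unordered pair of letter-values then collapses this product to $\prod_{i<j}[b_i,x_j]^{e_ie_j}$, again a function of $\overline{g}$ alone. I expect this well-definedness step to be the main obstacle, and the symmetry condition $[x_i,b_j]=[x_j,b_i]$ is exactly what it demands.

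Consequently $\phi(g)=g\,\nu(g)$, where $\nu(g)\in\Omega_1(Z(\Phi(G)))$ depends only on $\overline{g}$, so the substitution map descends to a well-defined endomorphism of $G$ with $\phi(x_i)=x_ib_i$ (indeed $\nu(x_i)=b_i$). Since $\nu$ is trivial on $\Phi(G)$, the map $\phi$ fixes $\Phi(G)$ elementwise; in particular $\phi(\nu(g))=\nu(g)$, whence $\phi^{2}(g)=g\,\nu(g)^{2}=g$ because $\nu(g)^{2}=1$. Thus $\phi^{2}=\mathrm{id}$, so $\phi$ is an automorphism of order dividing $2$ (and of order exactly $2$ unless every $b_i=1$), which completes the proof. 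As an alternative one could induct on $d$ via Lemma~\ref{G=AB}, extending along $G=\langle x_1,\dots,x_{d-1},\Phi(G)\rangle\cdot\langle x_d,\Phi(G)\rangle$, but the direct substitution argument above seems cleaner and makes transparent why each of the three hypotheses is needed.
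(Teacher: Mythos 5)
Your proof is correct, but it takes a genuinely different route from the paper's. The paper argues by induction on $j$ over the subgroups $H_j=\langle x_1,\dots,x_j,\Phi(G)\rangle$: the base case $H_1$ is handled by writing $h=x_1^im$ with $m\in\Phi(G)$ and setting $h^\alpha=(x_1b_1)^im$, and the inductive step glues the automorphism of $H_j$ with the one of $\langle x_{j+1},\Phi(G)\rangle$ via the extension criterion of Lemma \ref{G=AB}; the compatibility condition $[a,b]^\alpha=[a^\alpha,b^\beta]$ there is exactly where the hypotheses $[x_i,b_i]=1$ and $[x_i,b_j]=[x_j,b_i]$ enter, although the paper leaves that verification (and the base-case check) implicit. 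You instead define the map globally by substitution and prove well-definedness through the explicit normal form $\phi(w)=w\cdot\prod_i b_i^{e_i}\cdot\prod_{i<j}[b_i,x_j]^{e_ie_j}$, showing the correction term depends only on $g\Phi(G)$; this is precisely where $b_i^2=1$, the vanishing of the diagonal commutators, and the symmetry relation are each used, and your accounting of the pair-multiplicities $c_ic_j\bmod 2$ is right. Your route is more computational but also more transparent: it exhibits $\phi(g)=g\,\nu(\overline g)$ explicitly, from which $\phi|_{\Phi(G)}=\mathrm{id}$ and $\phi^2=\mathrm{id}$ drop out at once, whereas the paper's argument is shorter on the page but defers the substantive checks to the unstated hypotheses of Lemma \ref{G=AB}. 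Two cosmetic points: you work only with positive words in the $x_i$, which is harmless since $G$ is finite (inverses are positive powers); and, like the paper, what you actually produce is an automorphism of order dividing $2$, which has order exactly $2$ precisely when some $b_i\neq 1$.
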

\begin{proof}
For  $1\leq j\leq d$, let $H_j=\langle
x_1,\dots,x_j,\Phi(G)\rangle$. We prove  by induction on $j$ that
$\left|\begin{smallmatrix}x_i\mapsto x_i{b_i}\\ 1\leq i\leq j\end{smallmatrix}\right.$
 determines an automorphisms of order 2
on $H_j$ which fixes the $\Phi(G)$ element-wise. For $j=1$,  if
$h\in H_1$, then $h=x_1^im$, for some non-negative integer $i$ and
$m\in \Phi(G)$. Naturally  define $h^\alpha=(x_1b_1)^im$. Now it
is easy to see  that  $\alpha$ is the desired automorphism on
$H_1$.  Now suppose that $\alpha$ is the automorphism on $H_j$
determined by
$\left|\begin{smallmatrix}x_i\mapsto x_ib_i\\1\leq i\leq j\end{smallmatrix}\right.$
  and $\beta$ is  the automorphism on
$\langle x_{j+1}, \Phi(G)\rangle$ defined by $x_{j+1}\to
x_{j+1}b_{j+1}$.
 Now by Lemma \ref{G=AB}., $\alpha$ and $\beta$ have a common extension to an automorphism on $H_{j+1}$.
\end{proof}
\begin{theorem}
Let $G$ be a finite $2$-group of class $2$ satisfying $(\star)$.
Then $d(G)\leq 3.$
\end{theorem}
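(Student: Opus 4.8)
The plan is to convert condition $(\star)$, via Theorem \ref{aut}, into a dimension count over $\mathbb{F}_2$. Write $d=d(G)$ and fix generators $x_1,\dots,x_d$ as in Theorem \ref{aut}. First I would record the structural consequences of $(\star)$. By Remark \ref{Z} the centre $Z(G)$ is cyclic, and since $1\ne G'\le Z(G)$ the derived subgroup $G'$ is itself cyclic, so $\Omega_1(G')$ has order $2$; fix a generator $z$. Because $G'\le\Phi(G)$ and $G'\le Z(G)$ one gets $G'\le Z(\Phi(G))$, hence $\Omega_1(G')\le V:=\Omega_1(Z(\Phi(G)))$. Put $m=\dim_{\mathbb{F}_2}V=d(Z(\Phi(G)))$; Remark \ref{hom} already supplies the lower estimate $d\le m$, which I will pair with an upper estimate obtained below.

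Next I would linearise the commutator data. Since $G$ has class $2$, for each $i$ the map $b\mapsto[x_i,b]$ is a homomorphism $V\to G'$, and $b^2=1$ forces $[x_i,b]^2=[x_i,b^2]=1$, so its image lies in $\Omega_1(G')=\langle z\rangle$. Thus there are linear functionals $\ell_1,\dots,\ell_d\in V^*$ with $[x_i,b]=z^{\ell_i(b)}$. The hypotheses of Theorem \ref{aut} on a tuple $(b_1,\dots,b_d)\in V^d$ now read $\ell_i(b_i)=0$ and $\ell_i(b_j)=\ell_j(b_i)$ for $i<j$, a system of $\binom{d+1}{2}$ linear equations over $\mathbb{F}_2$; hence the solution space $W\subseteq V^d$ satisfies $\dim_{\mathbb{F}_2}W\ge dm-\binom{d+1}{2}$.

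The heart of the argument is to bound $W$ from above using $(\star)$. By Theorem \ref{aut} every $(b_i)\in W$ yields an automorphism $x_i\mapsto x_ib_i$ of order dividing $2$ fixing $\Phi(G)$ elementwise; such an automorphism is inner (the identity trivially, any other by $(\star)$), so it equals some $\theta_g$ with $g\in C_G(\Phi(G))=Z(\Phi(G))$, the last equality being the one recorded in Remark \ref{hom}. Comparing values on the generators gives $b_i=[x_i,g]$, so $b_i\in G'\cap V=\Omega_1(G')=\langle z\rangle$ for every $i$. Therefore $W\subseteq\langle z\rangle^{\,d}$ and $\dim_{\mathbb{F}_2}W\le d$. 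Combining the two estimates yields $dm-\binom{d+1}{2}\le d$, that is $m\le\frac{d+3}{2}$; together with $d\le m$ this forces $2d\le d+3$, i.e. $d\le 3$.

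I expect the main obstacle to be the bookkeeping that makes the linearisation legitimate: verifying that $G'$ is cyclic so that $\Omega_1(G')$ is one-dimensional over $\mathbb{F}_2$ (this is exactly what collapses the commutator maps to honest functionals and confines the inner tuples to $\langle z\rangle^{\,d}$), and checking that both families of conditions in Theorem \ref{aut} are genuinely $\mathbb{F}_2$-linear in the tuple. Once these are in place the inequality is immediate; the only point to watch is that $W$ may be cut out by fewer than $\binom{d+1}{2}$ independent equations, but this can only increase $\dim_{\mathbb{F}_2}W$ and hence only sharpens the contradiction.
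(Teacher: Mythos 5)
Your proof is correct and takes essentially the same route as the paper's: your linear system cut out by the functionals $\ell_i$ is exactly the paper's homomorphism $\varphi$ on $\Omega_1(Z(\Phi(G)))^{d}$, and your two rank estimates combine into the same inequality $\binom{d}{2}\le d$. You even make explicit the step the paper leaves implicit, namely that $(\star)$ confines the solution space to $\Omega_1(G')^{d}$ and hence bounds its dimension by $d$.
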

\begin{proof}

Let  $\frac{G}{\Phi(G)}=\langle
\overline{x_1}\rangle\times\dots\times\langle\overline{x_{d(G)}}\rangle$
for some elements $x_1,\dots,x_{d(G)}\in G$.
 Define
\begin{align*}
\varphi:\underbrace{\Omega_1(Z(\Phi(G)))\times\cdots\times
\Omega_1(Z(\Phi(G)))}_{d(G)-times} \mapsto
\underbrace{\Omega_1(Z(G))\times\cdots\times
\Omega_1(Z(G))}_{\binom{d(G)+1}{2}-times}
\end{align*}
by $(b_1,\dots,b_{d(G)})^\varphi=
([x_1,b_1],\dots,[x_{d(G)},b_{d(G)}],\dots, \underset{1\leq
i<j\leq d(G)}{[x_i,b_j][b_i,x_j]},\dots)$. It is easy to see that
$\varphi$ is a homomorphism  and if $(b_1,\dots,b_{d(G)})\in
ker(\varphi)$, then by Theorem \ref{aut}. the mapping
$\left|\begin{smallmatrix}x_i\mapsto x_i{b_i}\\1\leq i\leq
j\end{smallmatrix}\right.$ determines an automorphism of order $2$
leaving $\Phi(G)$ elementwise fixed.  Since the  domain of $\varphi$
is elementary abelian, then
$$d(ker(\varphi))\geq d(G)d(Z(\Phi(G)))- \binom{d(G)+1}{2}.$$
By main result of \cite{de} we may assume that $C_G(Z(\Phi(G)))= \Phi(G)$. Therefore it follows from Remark \ref{hom}. that $d(ker(\varphi))\geq \binom{d(G)}{2}$. Now
the condition $(\star)$ implies that $d(G)\geq d(ker(\varphi))\geq
\binom{d(G)}{2}$ and so $d(G)\leq 3$.
\end{proof}
\begin{lemma}
Let $G$ be a finite $2$-group of class $2$ satisfying $(\star)$.
Then $d(G)\neq 3$.
\end{lemma}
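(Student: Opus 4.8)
The plan is to argue by contradiction: assume $d(G)=3$ and show that this forces the homomorphism $\varphi$ constructed in the preceding theorem to be \emph{surjective}, which a short $\mathbb{F}_2$-linear computation then rules out. First I would collect the structural facts already available under $(\star)$: by Remark \ref{Z} the centre $Z(G)$ is cyclic, so $\Omega_1(Z(G))$ has order $2$; by the main result of \cite{de} we may assume $C_G(Z(\Phi(G)))=\Phi(G)$, whence (as in Remark \ref{hom}) $C_G(\Phi(G))=Z(\Phi(G))$ and in particular $Z(G)\le Z(\Phi(G))$. Feeding $d(G)=3$ into the two bounds established in the preceding theorem — the rank--nullity estimate $d(\ker\varphi)\ge 3\,d(Z(\Phi(G)))-6$ and the $(\star)$-estimate $d(\ker\varphi)\le d(G)=3$ — gives $3\,d(Z(\Phi(G)))-6\le 3$, so $d(Z(\Phi(G)))\le 3$; combined with $d(Z(\Phi(G)))\ge d(G)=3$ from Remark \ref{hom} this yields $d(Z(\Phi(G)))=3$. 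Every inequality is now an equality: $d(\ker\varphi)=3$, and since the domain $\Omega_1(Z(\Phi(G)))^{3}$ has rank $9$ while the codomain $\Omega_1(Z(G))^{6}$ has rank $6$, the map $\varphi$ must be onto.

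Next I would reinterpret $\varphi$ linearly. Writing $\Omega_1(Z(G))=\langle c\rangle\cong\mathbb{F}_2$, for each $i$ the assignment $b\mapsto[x_i,b]$ defines an $\mathbb{F}_2$-linear functional $\lambda_i\colon\Omega_1(Z(\Phi(G)))\to\mathbb{F}_2$ (linearity is the bilinearity of the commutator in a class-$2$ group, and $[x_i,b]$ has order at most $2$ because $b$ does). In these terms $\varphi(b_1,b_2,b_3)$ has diagonal coordinates $m_{ii}=\lambda_i(b_i)$ and off-diagonal coordinates $m_{ij}=\lambda_i(b_j)+\lambda_j(b_i)$ for $i<j$. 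The key observation is that the common kernel $\bigcap_i\ker\lambda_i$ consists of the $b\in\Omega_1(Z(\Phi(G)))$ centralizing $x_1,x_2,x_3$; since these together with $\Phi(G)$ generate $G$ and $b$ already centralizes $\Phi(G)$, this kernel equals $\Omega_1(Z(\Phi(G)))\cap Z(G)=\Omega_1(Z(G))\ne 0$. Hence the $3\times 3$ matrix over $\mathbb{F}_2$ with rows $\lambda_1,\lambda_2,\lambda_3$ is singular, and there is a nontrivial relation $\sum_i\epsilon_i\lambda_i=0$ with $\epsilon_i\in\mathbb{F}_2$ not all zero.

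Finally I would exhibit a nonzero linear form annihilating the image of $\varphi$. Given such a relation, set $\ell(M)=\sum_i\epsilon_i m_{ii}+\sum_{i<j}\epsilon_i\epsilon_j m_{ij}$. A direct expansion, using $\epsilon_i^2=\epsilon_i$ over $\mathbb{F}_2$, gives $\ell(\varphi(b_1,b_2,b_3))=\sum_{i,j}\epsilon_i\epsilon_j\lambda_i(b_j)=\sum_j\epsilon_j\big(\sum_i\epsilon_i\lambda_i\big)(b_j)=0$ for all $(b_1,b_2,b_3)$, while $\ell\ne 0$ because the coefficient of $m_{ii}$ is $\epsilon_i$ and some $\epsilon_i=1$. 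Thus $\mathrm{im}(\varphi)$ lies in a proper hyperplane of $\Omega_1(Z(G))^{6}$, contradicting surjectivity; therefore $d(G)\ne 3$.

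I expect the main obstacle to be the bookkeeping that pins down every inequality as an equality, so that $\varphi$ is genuinely forced to be onto, together with the clean identification $\bigcap_i\ker\lambda_i=\Omega_1(Z(G))$ — it is the nontriviality of this common kernel that makes the row matrix of the $\lambda_i$ singular and produces the annihilating form $\ell$. Everything after that is a one-line computation in $\mathbb{F}_2$.
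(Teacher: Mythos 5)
Your argument is correct, but it is genuinely different from the one in the paper. The paper's proof is a three-line commutator computation: since $Z(G)$ is cyclic (Remark \ref{Z}) and $G'\leq Z(G)$, the derived subgroup is cyclic and one may assume $G'=\langle [x_1,x_2]\rangle$; writing $[x_1,x_3]=[x_1,x_2]^i$ and $[x_2,x_3]=[x_1,x_2]^j$ and using bilinearity of commutators in a class-$2$ group, one checks directly that $x_1^jx_2^{-i}x_3\in Z(G)\leq \Phi(G)$, so $G=\langle x_1,x_2\rangle$, contradicting $d(G)=3$. Your route instead pushes the dimension count of the preceding theorem to its boundary case: at $d(G)=3$ the two inequalities $d(\ker\varphi)\geq 3\,d(Z(\Phi(G)))-6$ and $d(\ker\varphi)\leq d(G)$, together with $d(Z(\Phi(G)))\geq d(G)$ from Remark \ref{hom}, force $d(Z(\Phi(G)))=3$ and $\varphi$ surjective, and you then kill surjectivity with the explicit form $\ell$ built from a linear dependence $\sum_i\varepsilon_i\lambda_i=0$; that dependence exists because the common kernel of the $\lambda_i$ is $\Omega_1(Z(G))\neq 1$ inside the $3$-dimensional space $\Omega_1(Z(\Phi(G)))$. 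I checked the computation $\ell(\varphi(b_1,b_2,b_3))=\sum_{i,j}\varepsilon_i\varepsilon_j\lambda_i(b_j)=0$ and the nonvanishing of $\ell$; both are right, and the use of $d(\ker\varphi)\leq d(G)$ is legitimately imported from the proof of the preceding theorem. What your approach buys is a uniform continuation of the counting argument (the obstruction to surjectivity of $\varphi$ is made structural rather than found by hand); what it costs is length, and it leans on cyclicity of $Z(G)$ anyway (to know the codomain is exactly $\mathbb{F}_2^6$), which is the same hypothesis the paper exploits far more cheaply through the cyclicity of $G'$.
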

\begin{proof}
Suppose that $G= \langle
x_1, x_2, x_3\rangle.$ We may assume  that $G'= \langle
[x_1, x_2]\rangle$, since $Z(G)$ is cyclic by Remark \ref{Z}. Therefore $
[x_1, x_2 ]^i=[x_1, x_3], [x_1, x_2]^j=[x_2, x_3],$ for some integers $i, j.$ Then $[x_1, x_2^{-i}x_3]=1$ and $[x_1^jx_3, x_2]=1
$. Hence $[x_2, x_1^jx_2^{-i}x_3]=1$ and$[x_1, x_1^jx_2^{-i}x_3]=1$ and so $x_1^jx_2^{-i}x_3\in Z(G)\leq \Phi(G)$ (note that by \cite{de}, $Z(G)\leq \Phi(G)$). Therefore $G = \langle x_1, x_2\rangle$, a contradiction. This completes the proof.
\end{proof}
\section{Proof of Theorem $\ref{1}.$}
Let $G$ be a finite $2$-group of class $2$ satisfying condition $(\star)$.
By Section $2$, we may assume that  $d(G)=2$ and $Z(G)$ is cyclic. The proof of Theorem \ref{1}. is as follows:  \\
Y. K. Leong \cite{le} has given a complete classification of
$2$-generator $2$-groups of class $2$ with cyclic center. The classification is as follows:
\begin{itemize}
\item[(1)] $Q(n,r) =\langle a,b\mid
a^{2^n}=b^{2^r}=1, \,\,\,a^{2^{n-r}}=[a, b]\rangle$ and $2r\leq n$.
\item[(2)]$Q(n,r)=\langle a,b\mid
a^{2^n}=b^{2^r}=1, \,\,\,a^{2^r}=[a, b]^{2^{2r-n}},[[a,b],a]=
[[a,b],b]=1\rangle$ and $r\leq n< 2r$.
 \item[(3)]$R(n)=\langle
a,b\mid a^{2^{n+1}}=b^{2^{n+1}}=1, \,\,\,a^{2^n}= [a,
b]^{2^{n-1}}=b^{2^n},[[a,b],a]=[[a,b],b]=1\rangle$ and $ n\geq 1$.
\end{itemize}
Thus, to complete the proof of Theorem \ref{1}. it is enough to check which of these groups satisfy $(\star)$.
To this end,
first note that if a  $2$-generator $p$-group $G$ of class $2$ has a presentation
$$\langle a,b~|~r_\ell(a,b),~1\leq  \ell \leq m \rangle,$$
 then every element  $g\in G$ has the form $g=a^ib^j[a,b]^k$ for some integers $i, j$ and $k$. Moreover, if
$G=\langle a^ib^j[a,b]^k, a^{i'}b^{j'}[a,b]^{k'}\rangle$ and
$r_\ell(a^ib^j[a,b]^k, a^{i'}b^{j'}[a,b]^{k'})=1$, for all $\ell\in\{1,\dots,m\}$, then  by Von Dyck's theorem the mapping $\left|\begin{array}{l}a\mapsto a^ib^j[a,b]^k\\b\mapsto a^{i'}b^{j'}[a,b]^{k'}\end{array}\right.$ determines an automorphism of $G$.
Next, we apply the following result to verify whether a given automorphism of $G$ is inner.
\begin{remark}\label{inn}( \cite{Y}, Part (ii) of Lemma 1)
Suppose that  $G$ is a finite $2$-generator  $2$-group of class $2$, such that $G'=\langle a\rangle$. Then  ~$\alpha \in Aut(G)$ is inner if and only if $[G,\alpha]\leq
G'.$
\end{remark}
We now start to check groups $Q(n,r)$, $R(n)$. \\
\noindent
(1) \; Let $G=Q(n,r) =\langle a,b\mid
a^{2^n}=b^{2^r}=1, \,\,\,a^{2^{n-r}}=[a, b]\rangle$ and $2r\leq n$;
\begin{itemize}
\item[(i)] If $G=Q(2, 1)$, then  $G\simeq D_8$ and does not  satisfy $(\star)$ because it is easy to see that the following map $\alpha$ can be  extended to  a noninner automorphism of order $2$ leaving the Frattini subgroup of $G$ elementwise fixed.
$$\alpha:\left|\begin{matrix}
a\mapsto a^3\\
b\mapsto ab
\end{matrix}
\right.
$$
\item[(ii)] If $G=Q(n, 1)$, then $G$ does not  satisfy $(\star)$ because $G$ has exactly two non-inner automorphisms of order $2$ which act trivially on the Frattini subgroup as follow:
$$\alpha:\left|\begin{array}{l}
a\mapsto a^{1+2^{n-2}+m2^{n-1}}b\\
b\mapsto a^{2^{n-1}}b
\end{array}
\right.,
\; m\in\{0, 1\}
$$
\item[(iii)]  If $n>2$, then every   automorphism of $G$ which is leaving the Frattini subgroup
elementwise fixed, maps the generators $a$ and $b$ as either the map $\alpha_1$ or $\alpha_2$:
$$\alpha_1:\left|\begin{array}{l}
a\mapsto a^{1+m2^{n-1}}\\
b\mapsto a^{s2^{n-1}}b
\end{array}
\right.
,\;
\alpha_2:\left|\begin{array}{l}
a\mapsto a^{1+2^{n-2}+m2^{n-1}}b^{2^{r-1}}\\
b\mapsto a^{s2^{n-1}}b
\end{array}
\right.,
\; m,s\in\{0, 1\}
$$
\end{itemize}
Now Remark \ref{inn}. implies that $\alpha_1 \in Inn(G)$ and $\alpha_2\not\in Inn(G)$. On the other hand $|\alpha_2|\neq 2$. Therefore $G$ satisfies the condition $(\star)$.\\
(2)\;  Let $G=Q(n, r)=\langle a,b\mid
a^{2^n}=b^{2^r}=1, \,\,\,a^{2^r}=[a,
b]^{2^{2r-n}},[[a,b],a]=[[a,b],b]=1\rangle$ where $r\leq n<2r$.
\begin{itemize}
\item [(i)] If $n=r=1$, let
$\alpha:\left|\begin{array}{l}
a\mapsto b\\
b \mapsto a
\end{array}\right.$,
\item[(ii)] If $ n=r>1$ let
$\alpha:\left|\begin{array}{l}
a\mapsto a^{2^{r-1}+1}\\
b\mapsto b^{2^{r-1}+1}
\end{array}\right.$,
\item[(iii)]If $2r>n\geq r+1,\; r>1$, let
$\alpha:\left|\begin{array}{l}
 a\mapsto a^{2^{n-1}-2^{r-1}+1}[a, b]^{2^{2r-n-1}}\\
b\mapsto a^{2^{n-1}}b^{2^{r-1}+1}
\end{array}\right.$,
\end{itemize}
Then it can be verified that in each case $\alpha$ determines a non-inner automorphism of order $2$ leaving
the Frattini subgroup of $G$ elementwise fixed.
\\
 (3)\; Let $G=R(n)=\langle a,b\mid a^{2^{n+1}}=b^{2^{n+1}}=1,
\,\,\,a^{2^n}= [a,
b]^{2^{n-1}}=b^{2^n},[[a,b],a]=[[a,b],b]=1\rangle$, where $n\geq 1$;
 \begin{itemize}
\item [(i)] If $n=1$, let
$\alpha:\left|\begin{array}{l}
a\mapsto ab\\
b\mapsto b^3
\end{array}\right.$,
\item [(ii)]
If $n\geq2$, let
$\alpha:\left|\begin{array}{l}
a\mapsto a^{2^n+2^{n-1}+1}[a, b]^{2^{n-2}}\\
b\mapsto b^{2^n+2^{n-1}+1}[a, b]^{2^{n-2}}\\
\end{array}\right.$.
\end{itemize}
Then it can be verified that in each case $\alpha$ is a non-inner automorphism of order $2$ leaving
the Frattini subgroup of $G$ elementwise fixed.\\
\section*{Acknowledgements}
The authors are grateful to the office of Graduate Studies of the  University of Isfahan for their financial and moral supports.

\end{document}